\documentclass[11pt]{article}

%
\usepackage{geometry}
\usepackage{mathtools}

\usepackage{graphicx}
\usepackage{amsmath,amssymb,amsthm}
\usepackage{enumerate}
\usepackage{hyperref}
\usepackage[active]{srcltx}
\usepackage[T1]{fontenc}
\usepackage{color}
\usepackage[makeroom]{cancel}

\geometry{hmargin=1in,vmargin=1in}
\newtheorem{theo}{Theorem}

\newtheorem{lem}{Lemma}[section]

\newtheorem{cor}[lem]{Corollary}
\newtheorem{prop}[lem]{Proposition}

\newcommand{\eps}{\varepsilon}
\newcommand{\qtext}[1]{\quad\mbox{#1}\quad}
\newcommand{\qqtext}[1]{\qquad\mbox{#1}\qquad}

\newcommand{\rd}{\mathrm d}
\newcommand{\R}{\mathbb{R}}


\newcommand{\g}{\gamma}
\renewcommand{\l}{\lambda}
\renewcommand{\b}{\beta}
\newcommand{\EE}{\mathbb E}
\newcommand{\PP}{\mathbb P}
\renewcommand{\O}{\Omega}
\newcommand{\G}{\Gamma}

\newcommand{\indic}{\mathbf{1}}
\renewcommand{\L}{\mathcal L}

\numberwithin{equation}{section}
\begin{document}
\title{Trivariate distribution of sticky Brownian motion}
\date{}
\author{Jean-Baptiste Casteras and L\'eonard Monsaingeon}

\maketitle
\abstract{
In this short note we derive a closed form for the trivariate distribution (position, local time at the origin, and positive occupation time) of the one-dimensional sticky Brownian motion, thereby filling some gaps and fixing some mistakes in the literature.
}
\section{Introduction}
Sticky diffusions are stochastic processes that behave as normal diffusions in the interior of a domain but also can also ``stick'' to the boundary for a positive amount of time (possibly following a different dynamics there), while being allowed to reenter after hitting.
The simplest example of such diffusions is the one-dimensional, $\R$-valued \emph{sticky Brownian motion} $\left(S_t\right)_{t\geq 0}$ (SBM in short), which sticks at $x=0$.
Feller introduced and studied SBM in the 1950's \cite{feller1952parabolic,feller1954diffusion,feller1957generalized} while trying to classify all possible behaviours of one-dimensional diffusion at a boundary from a purely functional-analytic point of view.
Sticky diffusions have recently attracted renewed interest, see e.g. \cite{anagnostakis2022path,peskir2015boundary,engelbert2014stochastic,T21,amir1991sticky,bou2020sticky,konarovskyi2021spectral} and references therein.

The goal of this paper is to derive carefully a closed form for the full trivariate SBM distribution, where trivariate refers here to
\begin{enumerate}
 \item
position $y\in \R$
\item
local time at the origin
\begin{equation}
\label{eq:Lt=Lt}
L_t(S)\coloneqq \lim\limits_{\eps\to 0}\frac 1{2\eps}\int_0^t \indic_{\{[-\eps,+\eps]\}}(S_u)\rd u
=\theta\int_0^t\indic_{\{S_u=0\}}\rd u
\end{equation}
\item
positive occupation time
\begin{equation}
\label{eq:def_Gammat}
\Gamma_t(S)\coloneqq \int_0^t \indic_{\{S_u\geq 0\}}\rd u.
\end{equation}
\end{enumerate}
(Here $\theta>0$ is a stickiness coefficient.)
The second equality in \eqref{eq:Lt=Lt} is far from trivial and is actually part of the requirements to be satisfied by SBM, see Section~\ref{sec:SBM} for details.
Before stating our main result let us fix notations and define (for $t\geq 0$)
\begin{equation}
 \label{eq:def_h}
g(t,x)=g_t(x)\coloneqq \frac{1}{\sqrt{2\pi t}} e^{-\frac{|x|^2}{2t}}
\qqtext{and}
 h(t,x)=h_t(x)\coloneqq \frac{|x|}{\sqrt{2\pi}t^{\frac 32}} e^{-\frac{|x|^2}{2t}}.
\end{equation}
These are the Gaussian kernel and the distribution of the first hitting time $T_0$ at the origin for standard Brownian motion, respectively.
For $x,y\geq 0$ we write
$$
p^0_t(x,y)
=\Big[g(t,x-y)-g(t,x+y)\Big]
=\frac{1}{\sqrt{2\pi t}}\left[e^{-\frac{|x-y|^2}{2t}} - e^{-\frac{|x+y|^2}{2t}}\right]
$$
for the transition kernel of killed Brownian motion, i-e $\PP_x(B_t\in\rd y,T_0<t)=p^0_t(x,y)\rd y$.
Our main result reads
\begin{theo}
\label{theo:trivariate_x}
Let $(S_t)_{t\geq 0}$ be a standard SBM started from $x\geq 0$, with local time $L_t=L_t(S)$ and occupation time $\Gamma_t=\Gamma_t(S)$.
Then the trivariate distribution is given by
\begin{multline}
\label{eq:trivariate_x}
\PP_x(S_t\in\rd y,\,\G_t\in\rd\tau,\,L_t\in \rd l)\\
=
p^0_t(x,y)\rd y\delta_t(\rd\tau)\delta_0(\rd l)
+\frac 1\theta h\left(\tau-\frac l\theta,\frac l2\right)h\left(t-\tau,\frac l2+x\right)\delta_0(\rd y)\rd\tau\rd l
\\
+\rd y\rd\tau\rd l
\begin{cases}
h\left(\tau-\frac l\theta,\frac l2+y\right)h\left(t-\tau,\frac l2+x\right) & \mbox{if }y\geq 0\\
h\left(\tau-\frac l\theta,\frac l2\right)h\left(t-\tau,\frac l2+x-y\right) & \mbox{if }y\leq 0
\end{cases}.
\end{multline}
\end{theo}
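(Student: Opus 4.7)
My plan is to use the classical time-change representation of sticky Brownian motion and reduce the trivariate distribution to a computation for standard Brownian motion. Writing $S_t = B_{A^{-1}(t)}$ where $B$ is standard BM started at $x$ with local time $\ell$ at $0$ and $A(s) = s + \ell_s/\theta$, and setting $\sigma = A^{-1}(t)$, one reads off $L_t = \ell_\sigma$ and $\G_t = \Gamma^+_\sigma + L_t/\theta$, with $\Gamma^+_\sigma = \int_0^\sigma \indic_{\{B_u > 0\}}\rd u$ the BM positive occupation time. The reparametrization $\sigma = t - l/\theta$ and $\g = \tau - l/\theta$ then produces the SBM trivariate density from the BM trivariate density of $(B_\sigma, \ell_\sigma, \Gamma^+_\sigma)$.

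For the BM law, I would split according to the first hitting time $T_0$ of the origin. On the event $\{T_0 > \sigma\}$, no local time accumulates and the occupation is entirely positive, so $B_\sigma$ is distributed as a killed BM with density $p^0_\sigma(x,y)$; this accounts for the singular term $p^0_t(x,y)\rd y\,\delta_t(\rd\tau)\,\delta_0(\rd l)$. On $\{T_0 \leq \sigma\}$ the strong Markov property at $T_0$, whose density is $h(s, x)$, reduces the problem to the trivariate law of $(B_u, \ell_u, \Gamma^+_u)$ for BM started at $0$ with $u = \sigma - s$, to be convolved in $s$ with $h(\cdot, x)$.

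The technical core is this BM-from-$0$ computation. A clean way to obtain it is through It\^o's excursion theory: indexing excursions by local time at $0$, the positive and negative completed excursions up to local time level $l$ are independent stable subordinators whose total durations are each distributed as $h(\cdot, l/2)$, while the ``straddling'' excursion at BM time $u$ contributes the endpoint $y$ via It\^o's entrance law $n(e_w \in \rd y,\, w < \zeta) = h(w, |y|) \rd y$. A Palm computation then expresses the joint density of $(B_u, \ell_u, \Gamma^+_u)$ as an explicit product of $h$-factors, and the convolution identity $h(\cdot, a) * h(\cdot, b) = h(\cdot, a+b)$ for independent first-passage times is used repeatedly to reorganise the resulting integrals into the factorised form appearing in the statement.

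Finally, the atom at $y = 0$ requires a separate treatment because $\{S_t = 0\}$ has positive probability for SBM, contrary to BM at a fixed time. This atom arises precisely from the singular component $\rd\ell_s/\theta$ of the time-change measure $\rd A = \rd s + \rd\ell_s/\theta$ supported on the zero set of $B$, and the factor $1/\theta$ in the atomic term is exactly the Jacobian of this identification. After assembling all the pieces and substituting back $\sigma = t - l/\theta$, $\g = \tau - l/\theta$, the theorem follows. The main obstacle I anticipate is matching the precise factorisation $h(\tau - l/\theta, l/2 + y)\, h(t - \tau, l/2 + x)$ in the statement: a direct Palm-calculus derivation naturally yields a different (although Laplace-equivalent) arrangement, so one likely needs either a finer path decomposition---for instance at the last zero $g_\sigma$ before time $\sigma$, so that the positive meander ending at $y$ is isolated from the rest of the path---or an alternative derivation via a triple Laplace transform of $(t, L_t, \G_t)$ combined with Feynman--Kac for the killed SBM semigroup.
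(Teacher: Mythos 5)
Your outer architecture coincides with the paper's: both exploit the time change $S_t=B_{A_t}$ with $K_\tau=\tau+\frac1\theta L_\tau(B)$, both split on $\{T_0>t\}$ versus $\{T_0\le t\}$ and use the strong Markov property at $T_0$ together with the convolution identity $h(\cdot,a)\ast h(\cdot,b)=h(\cdot,a+b)$ to pass from $x=0$ to general $x$, and both correctly trace the atom $\delta_0(\rd y)$ with its $\frac1\theta$ weight back to the singular component $\frac1\theta\rd L$ of the time-change measure (this is precisely the term missing in the reference the paper corrects). Where you genuinely diverge is the technical core: you propose to obtain the trivariate law of $(B_u,\ell_u,\Gamma^+_u)$ for BM from $0$ via It\^o excursion theory and a Palm/last-zero decomposition, whereas the paper computes the triple $(t,\tau,l)$ Laplace transform of $\PP_0(B_t\ge y,\G_t\in\rd\tau,L_t\in\rd l)$ by solving a Feynman--Kac resolvent ODE (Lemma 3.1), transfers it to SBM through the substitution $\rd K_\tau=\rd\tau+\frac1\theta\rd L_\tau(B)$ (which is exactly where the discontinuity across $y=0$ appears, as an extra $\frac1\theta$ term in the transform), and then inverts the Laplace transform term by term before differentiating in $y$. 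The excursion route is more conceptual and explains \emph{why} the density factorizes as $h(\tau-\frac l\theta,\frac l2+y)h(t-\tau,\frac l2+x)$ (independence of the pre- and post-decomposition pieces), while the paper's route is elementary, self-contained, and leaves no ambiguity about normalization constants. The weak point of your proposal is that this core step is only sketched: you yourself flag that a direct Palm computation yields a ``Laplace-equivalent'' but differently arranged expression, and your proposed fix---``an alternative derivation via a triple Laplace transform combined with Feynman--Kac''---is exactly the paper's method, so as written the argument is not yet complete; carrying out the excursion computation with the correct It\^o-measure normalizations (so that the durations of positive and negative excursions up to local time $l$ each have law $h(\cdot,\frac l2)$, and the meander/entrance-law factor produces $h(\g,\frac l2+y)$ rather than a rearrangement of it) is the part that still needs to be done.
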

\noindent
Note that we implicitly mean here $0\leq \frac l\theta \leq \tau\leq t$ since $\frac{L_t}{\theta}=\int_0^t\indic_{\{S_u=0\}}\rd u\leq \int_0^t \indic_{\{S_u\geq 0\}}\rd u =\Gamma_t\leq t$.
We also stress that this full trivariate distribution \eqref{eq:trivariate_x} contains a Dirac measure $\delta_0(\rd y)$.
This fills a gap in \cite[Thm. 3.3]{T21} (with $\alpha=\frac 12$, zero-skewness), where only the absolute continuous $\rd y$ part for $y\neq 0$ was derived.

In a future work \cite{CMN} we will study sticky diffusions from a variational Partial Differential Equations perspective.
We will show that the corresponding Fokker-Planck equations, coupling a standard parabolic equation in the interior of a domain $\Omega\subset \R^d$ with Laplace-Beltrami driven diffusion along its boundary $\partial\Omega$ \cite{konarovskyi2021spectral}, can be recovered as gradient flows of nonstandard entropy functionals with respect to some Wasserstein-like ``sticky optimal transport'' distances in the spirit of \cite{jordan1998variational}.
Related bulk/interface transportation distances were previously introduced and exploited by the authors in \cite{chalub2021gradient,monsaingeon2021new,casteras2022hidden}, but unfortunately do not provide the correct framework for the problem at hand in \cite{CMN}.
The right transportation model will instead be dictated by purely stochastic considerations, which will require for technical reasons a closed form of the ($y,L_t$) distribution of \emph{reflected} Sticky Brownian motion.
This \emph{bivariate} distribution will be established here as an immediate consequence of Theorem~\ref{theo:trivariate_x}, but for later bibliographical convenience we record here the corresponding statement as
\begin{theo}
\label{theo:bivariate_reflected_x}
The one-sided, reflected SBM $\bar S_t\coloneqq |S_t|$ started from $x\geq 0$ has bivariate distribution
\begin{multline}
\label{eq:bivariate_reflected_x}
\PP_x(\bar S_t\in\rd y,\,L_t\in \rd l)
\\
=
p^0_t(x,y)\rd y\delta_0(\rd l)
+
2 h\left(t-\frac l\theta, l+x+y\right)\rd y\rd l
+
\frac 1\theta h\left(t-\frac l\theta,l+x\right)\delta_0(\rd y)\rd l.
\end{multline}
\end{theo}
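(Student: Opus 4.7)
\emph{Proof outline.} The plan is to marginalize out the occupation time $\G_t$ from the trivariate formula \eqref{eq:trivariate_x} and simultaneously fold the signed position $S_t$ onto its absolute value $\bar S_t=|S_t|$. For $y>0$ the bivariate density of $(\bar S_t,L_t)$ is obtained by summing the $y\geq 0$ and $y\leq 0$ contributions of \eqref{eq:trivariate_x} (with $y$ relabeled as $-y$ in the latter) and then integrating $\tau$ over $[l/\theta,t]$. The $y=0$ Dirac piece is produced by the same $\tau$-integration applied to the $\delta_0(\rd y)$ contribution of \eqref{eq:trivariate_x}, while the $l=0$ Dirac piece carries over directly from $p^0_t(x,y)\rd y\,\delta_t(\rd\tau)\delta_0(\rd l)$, which integrates trivially in $\tau$ and is already supported on $\{y\geq 0\}$ (so no folding is required).

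The main computational ingredient is the classical convolution identity
\begin{equation}
\label{eq:conv_h}
\int_0^s h(r,a)\,h(s-r,b)\,\rd r \;=\; h(s,a+b),\qquad a,b>0,
\end{equation}
which encodes the strong Markov property: the first hitting time of level $a+b$ by standard Brownian motion started at $0$ is distributed as the independent sum of the first hitting times of levels $a$ and $b$. After the change of variables $r=\tau-l/\theta$, applying \eqref{eq:conv_h} to each of the two folded contributions
\[
h\!\left(\tau-\tfrac l\theta,\tfrac l2+y\right)h\!\left(t-\tau,\tfrac l2+x\right)
\qtext{and}
h\!\left(\tau-\tfrac l\theta,\tfrac l2\right)h\!\left(t-\tau,\tfrac l2+x+y\right)
\]
yields in both cases $h(t-l/\theta,\,l+x+y)$, whose sum gives precisely the factor $2h(t-l/\theta,l+x+y)$ appearing in \eqref{eq:bivariate_reflected_x}. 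The same identity applied to the $\delta_0(\rd y)$ contribution produces $\frac 1\theta h(t-l/\theta,l+x)$, as required.

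I do not anticipate a serious obstacle. The only delicate point is the sign bookkeeping when folding $-y\mapsto y$ and verifying that there is no double counting of the mass at the origin: since \eqref{eq:trivariate_x} already encodes the $y=0$ mass as a single Dirac and the $y>0$ and $y<0$ branches are mutually exclusive, no duplication occurs. Thus, assuming Theorem~\ref{theo:trivariate_x}, the whole argument reduces to the convolution identity \eqref{eq:conv_h} together with a short change of variables.
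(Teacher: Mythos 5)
Your argument is correct and follows essentially the same route as the paper: the paper first integrates \eqref{eq:trivariate_x} in $\tau$ via the convolution rule $h(\bullet,a)\ast h(\bullet,b)=h(\bullet,a+b)$ to get the signed bivariate law (recorded as a corollary), and then folds $S_t\mapsto|S_t|$ exactly as you do, noting that $p^0_t(x,\cdot)$ vanishes on $y<0$ so only the absolutely continuous part doubles. Your single combined step with the identity \eqref{eq:conv_h} reproduces the same computation.
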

\noindent
Here we implicitly mean $y\geq 0$, and again $0\leq \frac l\theta\leq t$.
\\

Let us briefly sketch the main line of proof for Theorem~\ref{theo:trivariate_x}.
We follow the strategy of \cite{karatzas1984trivariate}, later generalized in \cite{APP11,T21}.
The first step consists in computing the $(t,\tau,l)$ Laplace transform of the joint law $\PP_0(B_t\geq y,\,\G_t\in\rd\tau,\,L_t\in \rd l)$ for Brownian motion, started from $x=0$ and for a fixed parameter $y\in\R$.
 Then, exploiting the fact that BM and SBM are connected through a specific time change, we will get a closed form of the Laplace transform of the corresponding law for SBM.
Explicitly inverting the Laplace transform and taking next the derivative with respect to $y$, we will deduce the trivariate distribution for SBM started at the origin.
Finally conditioning with respect to the hitting time, we ultimately determine the full trivariate distribution started from any arbitrary $x\in\R$.

The plan of the paper is as follows:
In Section 2 we recall very briefly a few basic properties of SBM.
The third section contains the heavy work, namely the computation of the Laplace transform of the joint law $\PP_0(S_t\geq y,\,\G_t\in\rd\tau,\,L_t\in \rd l)$.
Finally in Section 4 we prove the main results, Theorems \ref{theo:trivariate_x} and \ref{theo:bivariate_reflected_x}.
\section{Sticky Brownian motion and its properties}
\label{sec:SBM}
In this section we collect a few standard properties of SBM to be used in the sequel.
We refer e.g. to \cite{anagnostakis2022path,Howitt2007} and references therein for more details.
Following Feller \cite{feller1952parabolic}, SBM $(S_t)_{t\geq 0}$ can be defined as a Markov process with infinitesimal generator
$$
L=\dfrac{1}{2}\dfrac{d^2}{dx^2}
$$
and domain
$$
\operatorname{dom} (L)= \left\{f\in C_b (\R) :\qquad   f\in C^2 (\R^\ast)\qtext{and}  \dfrac{f'(0^+) - f'(0^-)}{\theta}=f''(0^-)=f''(0^+)  \right\}.
$$
Here we shall not dwelve in the rigorous probabilistic setting.
Let us just mention that $\left(\O,(\mathcal F_t)_{t\geq 0},\PP\right)$ is a usual filtered space, and we write as always $\PP_x,\EE_x$ to emphasize the initial condition $S_0=x$.

An alternate characterisation of SBM is that $(S_t)_{t\geq 0}$ solves the coupled SDE system
\begin{equation}
\label{eq:def_SBM_via_SDE}
\begin{cases}
\rd S_t =\indic_{[S_t \neq 0]} \rd B_t
\\
\indic_{[S_t =0]} \rd t =\theta \rd L_t  (S)
\end{cases},
\end{equation}
see \cite{engelbert2014stochastic}.
We stress that \eqref{eq:def_SBM_via_SDE} is nonstandard in that it does not admit strong solutions, see again \cite{engelbert2014stochastic}.
Roughly speaking, SBM is a standard Brownian motion outside of the origin, and the dynamics at $x=0$ is characterized by the fact that the time spent there is proportional to the local time as in \eqref{eq:Lt=Lt}.
As such, it is clear that SBM and BM share the same excursions, as long as crossing the origin is not involved.
In particular SBM and BM share the same first hitting time at the origin, which we denote by
$$
T_0(S)=T_0(B).
$$
An important property that we will use repeatedly in the remaining of this note is that SBM $S_t$ can be explicitly related to standard BM $B_t$ through an implicit time change \cite{ito1963brownian,engelbert2014stochastic}.
More precisely, setting
$$
A_t\coloneqq \int_0^t\indic_{[S_r\neq 0]}\rd r = t-\int_0^t\indic_{[S_r =0]}\rd r = t - \frac{1}{\theta}L_t(S),
$$
there holds
$$
S_t=B_{A_t}.
$$
As a consequence the SBM and BM local time and positive occupation time are also related by
 \begin{equation}
 \label{eq:Lt_Gt}
 L_t(S)=L_{A_t}(B)
 \qqtext{and}
 \G_t(S)=\G_{A_t}(B)+\frac 1\theta L_{A_t}(B),
 \end{equation}
 see e.g. \cite[lemma 3.1]{T21}.
 In addition, the pseudo inverse
$$
K_\tau
\coloneqq (A^{-1})_t=\inf\{t:\quad A_t>\tau\}
$$
satisfies
\begin{equation}
\label{eq:Ktau_dKtau}
 K_\tau = \tau +\frac{1}{\theta}L_\tau(B)
\qtext{and}
\rd K_\tau =\rd \tau +\frac{1}{\theta}\rd L_\tau(B).
\end{equation}

\section{Trivariate Laplace transform}
The aim of this section is to compute the $(t,\tau,l)$ Laplace transform of the joint law
$$
\PP_0(S_t\geq y,\,\G_t\in\rd\tau,\,L_t\in \rd l).
$$
 We first compute this Laplace transform for pure Brownian motion:
\begin{lem}
\label{lem:integral_dt}
Fix $y\in \R$ and let $B_t$ be a standard Brownian motion with local time $L_t=L_t (B)$  and occupation time $\Gamma_t=\Gamma_t(B)=\int_0^t \indic_{[0,\infty)}(B_\tau)\rd\tau$.
Then for $\lambda,\beta,\gamma>0$ we have
\begin{multline}
\label{eq:u0_explicit}
 \EE_0\int_0^{\infty}\indic_{[y,\infty)}(B_t)e^{-\lambda t -\beta\Gamma_t-\gamma L_t}\rd t
 \\=
 \frac 2{2\g+\sqrt{2(\l+\b)}+\sqrt{2\l}}
\begin{cases}
        \frac 1{\sqrt{2(\l+\b)}}e^{-\sqrt{2(\l+\b)}y} & \mbox{if }y>0\\
       \frac 1{\sqrt{2\l}}\left(1-e^{\sqrt{2\l}y}\right)+\frac 1{\sqrt{2(\l+\b)}} & \mbox{if }y\leq 0.
\end{cases}
\end{multline}
\end{lem}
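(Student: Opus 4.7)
The plan is to identify the quantity of interest with $u(0)$, where
$$
u(x)\coloneqq \EE_x \int_0^\infty \indic_{[y,\infty)}(B_t)\, e^{-\l t - \b\G_t - \g L_t}\rd t,
$$
and then characterize $u$ as the unique bounded solution of a piecewise linear ODE with a ``sticky'' transmission condition at the origin. Applying Itô--Tanaka to $u(B_t)$ (which is only piecewise $C^2$ at $0$ and so picks up a local-time contribution) together with the product rule against the bounded-variation exponential $e^{-\l t - \b\G_t - \g L_t}$, the process
$$
M_t = e^{-\l t - \b\G_t - \g L_t}u(B_t) + \int_0^t e^{-\l s - \b\G_s - \g L_s}\indic_{[y,\infty)}(B_s)\rd s
$$
will be a martingale as soon as
$$
\tfrac12 u''(x) - \bigl(\l + \b\indic_{[0,\infty)}(x)\bigr)u(x) + \indic_{[y,\infty)}(x) = 0, \qquad x\neq 0,
$$
together with $u$ bounded on $\R$, $u$ continuous at $0$, and the transmission condition $u'(0^+) - u'(0^-) = 2\g\, u(0)$. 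Taking $\EE_0[M_\infty - M_0]$ will then yield $u(0) = \EE_0\int_0^\infty \indic_{[y,\infty)}(B_t)e^{-\l t - \b\G_t - \g L_t}\rd t$, which is the left-hand side of \eqref{eq:u0_explicit}.

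The second step is to solve this ODE explicitly. The characteristic roots are $\pm\sqrt{2\l}$ on $\{x<0\}$ and $\pm\sqrt{2(\l+\b)}$ on $\{x>0\}$, and the indicator $\indic_{[y,\infty)}$ contributes a constant particular solution $\frac 1\l$ or $\frac 1{\l+\b}$ depending on the region where it is active. Splitting according to the sign of $y$, I would piece together three exponential regions on $(-\infty,\min(y,0))$, $(\min(y,0),\max(y,0))$ and $(\max(y,0),+\infty)$; discard the exponentially growing solutions at $\pm\infty$; impose $C^1$-matching at $x=y$ (since the forcing is only bounded there, $u'$ does not jump); and finally use continuity of $u$ and the jump $u'(0^+)-u'(0^-) = 2\g u(0)$ at the origin. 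The $C^1$-matching at $x=y$ determines the ``inner'' coefficients from the particular solutions, and the jump relation at the origin then produces the common denominator $2\g + \sqrt{2\l} + \sqrt{2(\l+\b)}$ together with the two case-dependent numerators appearing in \eqref{eq:u0_explicit}.

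The main obstacle is the rigorous justification of the Feynman--Kac reduction: one must check that $M_t$ is a true (and not merely local) martingale, so that optional stopping may be invoked at $t=\infty$; this follows from $\l>0$, boundedness of $u$ and an easy uniform integrability argument. One must likewise ensure uniqueness in the BVP, which is guaranteed by the requirement that $u$ stay bounded at $\pm\infty$, killing the exponentially growing homogeneous solutions. Once these two points are settled, the rest of the argument is an elementary linear system in four unknowns in each of the cases $y>0$ and $y\leq 0$, and a direct verification that the resulting $u(0)$ matches the right-hand side of \eqref{eq:u0_explicit}.
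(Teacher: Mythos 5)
Your proposal is correct and ends up at the same ODE boundary-value problem that the paper solves, but you justify the reduction in the opposite logical direction. The paper proves directly that the probabilistically defined $u$ satisfies the ODE and the slope condition: it splits $u=v+w$ at the first hitting time $T_0$, uses the strong Markov property to write $v$ and $w$ as explicit integrals of Gaussian kernels, and then differentiates under the integral sign. You instead construct a bounded solution of the boundary-value problem by hand and verify a posteriori, via It\^o--Tanaka and the product rule against the finite-variation exponential $e^{-\l t-\b\G_t-\g L_t}$, that
$$
M_t=e^{-\l t-\b\G_t-\g L_t}u(B_t)+\int_0^t e^{-\l s-\b\G_s-\g L_s}\indic_{[y,\infty)}(B_s)\rd s
$$
is a martingale, so that $u(0)=\EE_0[M_\infty]$ equals the desired expectation. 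Both routes are sound, and the linear system you describe (four constants, matching at $x=y$ and at $x=0$) is exactly the one the paper solves. Your verification argument is arguably cleaner in that it avoids establishing the regularity of the probabilistic $u$ (the delicate differentiation under the integral sign), at the cost of checking that the local martingale is a true, uniformly integrable martingale --- which, as you note, follows from boundedness of $u$ and $u'$ together with the factor $e^{-\l t}$. Two points you should make explicit when writing this up: (i) the local time in the transmission condition must be the symmetric local time with the normalization of \eqref{eq:Lt=Lt}, so that the atom of the measure $u''(\rd a)$ at the origin contributes exactly $\frac 12\bigl(u'(0^+)-u'(0^-)\bigr)\rd L_t=\g u(0)\rd L_t$ and cancels the $-\g u(B_t)\rd L_t$ term produced by the exponential (this is precisely where the factor $2$ in your jump condition is consumed); (ii) at $x=y$ the function $u$ is $C^1$ but only piecewise $C^2$, so $u''(\rd a)$ has no atom there and the set $\{t:B_t=y\}$ is Lebesgue-null, hence It\^o--Tanaka applies with no further correction at $y$.
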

\noindent
We stress that the right-hand side in \eqref{eq:u0_explicit} is continuous across $y=0$.
For $y\geq 0$ this exact statement can be found in \cite[lemma 2.1]{karatzas1984trivariate}.
The case $y<0$ is addressed in \cite[lemma 3.1]{APP11}, but contains a mistake because the expression given therein is inconsistent with the limit $y\to-\infty$.
For the sake of completeness, and since this is actually the cornerstone of the paper, we chose to give here a self-contained proof with full details.
\begin{proof}
Let
$$
u(x)
\coloneqq
\EE_x\int_0^{\infty}\indic_{[y,\infty)}(B_t)e^{-\lambda t -\b\G_t-\gamma L_t}\rd t.
$$
We follow \cite[\S 2.3]{ito1996diffusion} and derive below an ODE satisfied by $u$, solve it explicitly, and finally evaluate $u(0)$.

We first claim that $u$ is continuous and bounded, $C^1$ except at $x=0$ (but in particular $C^1$ at $x=y$), $C^2$ outside of $x\in\{0,y\}$ with
\begin{equation}
\label{eq:ODE_u}
 \left[\lambda+\beta\, \indic_{[0,\infty)}(x)\right] u(x) -\frac 12 u''(x)=\indic_{[y,\infty)}(x),
\end{equation}
and satisfies the slope condition
\begin{equation}
\frac{1}{2}\left(u'(0^+)-u'(0^-)\right)=\gamma u(0).
\label{eq:slope_compatibility}
\end{equation}
As a first step, boundedness is obvious from $0\leq \indic_{[y,\infty)}(B_t)e^{-\lambda t -\b\G_t-\gamma L_t}\leq e^{-\lambda t}$.
Let now $T_0=T_0(B)=\min\{t>0:B_t=0\}$ denote the first hitting time at zero, and write
\begin{multline*}
u(x)
=
\EE_x\int_0^{T_0}\indic_{[y,\infty)}(B_t)e^{-\lambda t -\b\G_t-\gamma L_t}\rd t
+
\EE_x\int_{T_0}^{\infty}\indic_{[y,\infty)}(B_t)e^{-\lambda t -\b\G_t-\gamma L_t}\rd t
\\
=
\underbrace{
\EE_x\int_0^{T_0}\indic_{[y,\infty)}(B_t)e^{-\lambda t -\b\G_t}\rd t
}_{\coloneqq v(x)}
+
\underbrace{
\EE_x\left(e^{-\lambda T_0}\int_0^{\infty}\indic_{[y,\infty)}(B_{s+T_0})e^{-\lambda s -\b\G_{s+T_0}-\gamma L_{s+T_0}}\rd s\right)}_{\coloneqq w(x)}.
\end{multline*}
Here we used that the local time $L_t=0$ for $t\leq T_0$.

Let us compute separately $v(x)$ and $w(x)$.
By definition of the occupation time we have $\Gamma_t=t$ for $t\leq T_0$ if $x\geq 0$, while $\Gamma_t=0$ for $t\leq T_0$ if $x<0$.
Hence
\begin{equation}
\label{eq:def_v_x_><0}
v(x)=
\begin{cases}
\EE_x\int_0^{T_0}\indic_{[y,\infty)}(B_t)e^{-(\lambda+\beta) t}\rd t & \mbox{if }x\geq 0
\\
\EE_x\int_0^{T_0}\indic_{[y,\infty)}(B_t)e^{-\lambda t}\rd t & \mbox{if }x< 0.
\end{cases}
\end{equation}
For $x<0$ we have thus, by the strong Markov property,
\begin{multline*}
v(x)
=
\EE_x\int_0^{T_0}\indic_{[y,\infty)}(B_t)e^{-\lambda t}\rd t
=
\EE_x\int_0^{\infty}\indic_{[y,\infty)}(B_t)e^{-\lambda t}\rd t  - \EE_x\int_{T_0}^{\infty}\indic_{[y,\infty)}(B_t)e^{-\lambda t}\rd t
\\
=
\int_0^{\infty}\EE_x\left[\indic_{[y,\infty)}(B_t)\right]e^{-\lambda t}\rd t - \EE_x\left(e^{-\lambda{T_0}}\int_{0}^{\infty}\indic_{[y,\infty)}(B_{s+{T_0}})e^{-\lambda s}\rd s\right)
\\
=
\int_0^{\infty}\EE_x\left[\indic_{[y,\infty)}(B_t)\right]e^{-\lambda t}\rd t  - \left(\EE_xe^{-\lambda{T_0}}\right)\EE_x\left(\int_{0}^{\infty}\indic_{[y,\infty)}(B_{s+{T_0}})e^{-\lambda s}\rd s\right)
\\
=
\int_0^{\infty}\EE_x\left[\indic_{[y,\infty)}(B_t)\right]e^{-\lambda t}\rd t  - \left(\EE_xe^{-\lambda{T_0}}\right)\EE_0\left(\int_{0}^{\infty}\indic_{[y,\infty)}(B_{s})e^{-\lambda s}\rd s\right)
\\
=
\int_0^{\infty}\EE_x\left[\indic_{[y,\infty)}(B_t)\right]e^{-\lambda t}\rd t  - \left(\EE_xe^{-\lambda{T_0}}\right)\int_{0}^{\infty}\EE_0\left[\indic_{[y,\infty)}(B_{s})\right]e^{-\lambda s}\rd s.
\end{multline*}
Now, since by definition $T_0\sim h$ one has $\EE_x e^{-\lambda T_0}=\int_0^\infty e^{-\lambda t}h(t,x)\rd t=e^{-\sqrt{2\lambda}|x|}$, whence
$$
x<0:\qquad
v(x)
=
\int_0^{\infty}\int_y^{\infty}g_t(z-x)\rd ze^{-\lambda t}\rd t  - e^{-\sqrt{2\lambda}|x|}\int_{0}^{\infty}\int_y^{\infty}g_s(z)\rd ze^{-\lambda s}\rd s.
$$
Similarly, changing $\lambda$ into $\lambda+\beta$ in \eqref{eq:def_v_x_><0} gives
\begin{multline*}
x\geq 0:\qquad
v(x)
=
\int_0^{\infty}\int_y^{\infty}g_t(z-x)\rd ze^{-(\lambda+\beta) t}\rd t
\\
- e^{-\sqrt{2(\lambda+\beta)}|x|}\int_{0}^{\infty}\int_y^{\infty}g_s(z)\rd ze^{-(\lambda+\beta) s}\rd s.
\end{multline*}
For $w$ we have similarly
\begin{multline*}
 w(x)=\EE_x\left(e^{-\lambda{T_0}}\int_0^{\infty}\indic_{[y,\infty)}(B_{s+{T_0}})e^{-\lambda s -\b\G_{s+{T_0}}-\gamma L_{s+{T_0}}}\rd s\right)
 \\
 =\EE_x\left(e^{-\lambda{T_0}}\right)\left(\EE_0\int_0^{\infty}\indic_{[y,\infty)}(B_{s})e^{-\lambda s -\b\G_{s}-\gamma L_{s}}\rd s\right)
 = e^{-\sqrt{2\lambda}|x|} u(0).
\end{multline*}
With this explicit representation formula for $u(x)=v(x)+w(x)$ it is then a simple exercise to establish the regularity of $u$ and differentiate under the integral sign (with $\partial^2_{xx}g_t=2\partial_t g_t$) in order to establish \eqref{eq:ODE_u}\eqref{eq:slope_compatibility}.

In order to get to \eqref{eq:u0_explicit} it suffices now to solve the ODE problem explicitly.
Consider first $y< 0$:
Solving explicitly \eqref{eq:ODE_u} on the three separate intervals we have
$$
u(x)=
\begin{cases}
  C_1\exp\left(\sqrt{2\l} x\right) & \mbox{if }x\in (-\infty,y)\\
  C_2\exp\left(\sqrt{2\l} x\right) + C_3\exp\left(-\sqrt{2\l} x\right)+\frac{1}{\l} & \mbox{if }x\in(y,0)\\
  C_4 \exp\left(-\sqrt{2(\l+\b)} x\right)+\frac{1}{\l+\b} & \mbox{if }x\in(0,\infty).
\end{cases}
$$
($u$ is bounded so one of the exponential solutions must be discarded at infinity.)
The four constants $C_i,i=1\dots 4$ can be explicitly determined by solving a linear system arising from the four conditions that $u$ is continuous across $x=0$ and $x=y$, $u$ is $C^1$ across $x=y$, and \eqref{eq:slope_compatibility}.
Here we omit the elementary but tedious computations for the sake of brevity.
Finally evaluating $u(0)=C_4+\frac{1}{\l+\b}$ gives the second case in \eqref{eq:u0_explicit}.

For $y>0$ a very similar computation leads to the first alternative in \eqref{eq:u0_explicit}, as correctly derived in \cite[lem. 3.1]{APP11}.

Finally, for the particular case $y=0$ one can simply argue that, for fixed $x$, the map $y\mapsto \EE_x\int_0^{\infty}\indic_{[y,\infty)}(B_t)e^{-\l t -\b\Gamma_t-\gamma L_t}\rd t$ is continuous (by Lebesgue dominated convergence, $y\mapsto \indic_{[y,\infty)}(B_t(\omega))$ being continuous for $\rd \PP\rd t$ a.e. $(\omega,t)$ by standard properties of Brownian motion).
\end{proof}
As already anticipated we can now use the time change to compute the exact same quantity for SBM instead of pure BM:
\begin{prop}
\label{cor:E0_sticky}
Let $(S_t)_{t\geq 0}$ be a standard SBM with local time at the origin $L_t=L_t (S)$ and occupation time $\Gamma_t=\G_t(S)$.
For fixed $y\in \R$ there holds
\begin{multline}
\label{eq:E0_sticky}
\EE_0\int_0^\infty \indic_{[y,\infty)}(S_t) e^{-\lambda t -\beta\G_t-\gamma L_t} dt
\\
=
\frac 2{2\left(\g+\frac{\l+\b}\theta\right)+\sqrt{2(\l+\b)}+\sqrt{2\l}}
\begin{cases}
        \frac 1{\sqrt{2(\l+\b)}}e^{-\sqrt{2(\l+\b)}y} & \mbox{if }y>0\\
       \frac 1{\sqrt{2\l}}\left(1-e^{\sqrt{2\l}y}\right)+\frac 1{\sqrt{2(\l+\b)}} +\frac 1\theta & \mbox{if }y\leq 0
\end{cases}
\end{multline}
\end{prop}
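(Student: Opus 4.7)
The plan is to use the implicit time change $S_t=B_{A_t}$ recalled in Section~\ref{sec:SBM} to reduce the sticky Laplace transform to a purely Brownian one, and then quote Lemma~\ref{lem:integral_dt}.

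Concretely, I would change variables $t=K_\tau$ (with $K=A^{-1}$) in the integral on the left-hand side of \eqref{eq:E0_sticky}. Since $S_{K_\tau}=B_\tau$, since \eqref{eq:Lt_Gt} gives $L_{K_\tau}(S)=L_\tau(B)$ and $\Gamma_{K_\tau}(S)=\Gamma_\tau(B)+\tfrac1\theta L_\tau(B)$, and since $\rd K_\tau=\rd\tau+\tfrac1\theta \rd L_\tau(B)$ by \eqref{eq:Ktau_dKtau}, the exponential weight regroups into $e^{-\lambda\tau-\beta\Gamma_\tau(B)-\tilde\gamma L_\tau(B)}$ with the effective parameter $\tilde\gamma:=\gamma+\tfrac{\lambda+\beta}{\theta}$. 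The integral then splits naturally into a piece against $\rd\tau$ and a piece against $\rd L_\tau(B)$. The $\rd\tau$ piece is precisely Lemma~\ref{lem:integral_dt} with $\gamma$ replaced by $\tilde\gamma$; one checks that the resulting prefactor $\tfrac{2}{2\tilde\gamma+\sqrt{2(\lambda+\beta)}+\sqrt{2\lambda}}$ is already the one in \eqref{eq:E0_sticky}, and that this piece alone delivers the entire $y>0$ branch as well as the two first summands of the $y\leq 0$ branch.

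For the second piece I would exploit that $\rd L_\tau(B)$ is supported on $\{B_\tau=0\}$: the indicator collapses to $\indic_{[y,\infty)}(0)$, which vanishes if $y>0$ (confirming the first branch exactly) and equals $1$ if $y\leq 0$. In the latter case one still has to evaluate the auxiliary quantity $I:=\EE_0\int_0^\infty e^{-\lambda\tau-\beta\Gamma_\tau-\tilde\gamma L_\tau}\rd L_\tau(B)$, and this is where the main work sits. My preferred route is to mimic the ODE argument of Lemma~\ref{lem:integral_dt}: setting $\tilde u(x):=\EE_x\int_0^\infty e^{-\lambda\tau-\beta\Gamma_\tau-\tilde\gamma L_\tau}\rd L_\tau(B)$ and splitting at the first hitting time $T_0$, one shows that $\tilde u$ is bounded, solves the \emph{homogeneous} equation $[\lambda+\beta\indic_{[0,\infty)}(x)]\tilde u=\tfrac12\tilde u''$ away from the origin, and satisfies the \emph{inhomogeneous} slope condition $\tfrac12(\tilde u'(0^+)-\tilde u'(0^-))=\tilde\gamma\,\tilde u(0)-1$, where the $-1$ encodes the singular Dirac contribution of the $\rd L$ integrand at $0$. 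Solving in closed form (bounded exponentials on each side of $0$, matched by continuity and by the slope condition) yields $I=\tilde u(0)=\tfrac{2}{2\tilde\gamma+\sqrt{2(\lambda+\beta)}+\sqrt{2\lambda}}$, and multiplying by $\tfrac1\theta$ produces exactly the extra $\tfrac1\theta$ in the $y\leq 0$ branch of \eqref{eq:E0_sticky}.

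The step I expect to require most care is the rigorous handling of the Stieltjes change of variables under the singular measure $\rd L_\tau(B)$ together with the correct derivation of the modified slope condition for $\tilde u$; once these are in place, the rest reduces to the same elementary algebra already performed in Lemma~\ref{lem:integral_dt}.
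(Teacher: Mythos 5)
Your proposal is correct, and its skeleton coincides with the paper's proof: the same time change $t=K_\tau$ with \eqref{eq:Lt_Gt} and \eqref{eq:Ktau_dKtau}, the same effective parameter $\tilde\gamma=\gamma+\frac{\lambda+\beta}{\theta}$, the same splitting into a $\rd\tau$ part (handled by Lemma~\ref{lem:integral_dt}) and a $\rd L_\tau(B)$ part, and the same observation that $\rd L_\tau(B)$ is carried by $\{B_\tau=0\}$ so that the indicator collapses to $\indic_{[y,\infty)}(0)$. Where you genuinely diverge is in evaluating the remaining constant $J=\EE_0\int_0^\infty e^{-\lambda\tau-\beta\Gamma_\tau-\tilde\gamma L_\tau}\rd L_\tau(B)$. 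The paper avoids any new ODE by an integration-by-parts trick: it computes $J_1=\EE_0\int_0^\infty e^{-\lambda\tau}\,[-\rd(e^{-\beta\Gamma_\tau-\tilde\gamma L_\tau})]=1-\lambda\EE_0\int_0^\infty e^{-\lambda\tau-\beta\Gamma_\tau-\tilde\gamma L_\tau}\rd\tau$ (evaluated via the $y\to-\infty$ limit of \eqref{eq:u0_explicit}), computes $J_2=\EE_0\int_0^\infty e^{-\cdots}\rd\Gamma_\tau$ via Lemma~\ref{lem:integral_dt} at $y=0$, and extracts $J=\frac1{\tilde\gamma}(J_1-\beta J_2)$; everything is thus reduced to quantities already established in Lemma~\ref{lem:integral_dt}. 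Your route instead sets $\tilde u(x)=\EE_x\int_0^\infty e^{-\lambda\tau-\beta\Gamma_\tau-\tilde\gamma L_\tau}\rd L_\tau(B)$ and solves the homogeneous two-sided ODE with the inhomogeneous vertex condition $\frac12(\tilde u'(0^+)-\tilde u'(0^-))=\tilde\gamma\,\tilde u(0)-1$; I checked that the bounded solutions $\tilde u(x)=\tilde u(0)e^{-\sqrt{2(\lambda+\beta)}x}$ for $x>0$ and $\tilde u(0)e^{\sqrt{2\lambda}x}$ for $x<0$ together with that condition yield exactly $\tilde u(0)=\frac{2}{2\tilde\gamma+\sqrt{2(\lambda+\beta)}+\sqrt{2\lambda}}$, matching the paper's $J$. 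The trade-off is that your version requires justifying one more analytic fact (the $-1$ in the slope condition, encoding the Dirac source produced by integrating against $\rd L$), which you rightly flag as the delicate step, whereas the paper's detour recycles Lemma~\ref{lem:integral_dt} and only needs the elementary identities $L_0=\Gamma_0=0$, $L_\infty=\Gamma_\infty=\infty$ a.s.; on the other hand your argument is more systematic and would generalize more readily to other additive functionals placed under the $\rd L$ integral.
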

\noindent
Note that this is almost identical to \eqref{eq:u0_explicit} for pure Brownian motion, upon substituting $\gamma$ with $\tilde\g =\g+\frac\b\theta+\frac\l\theta$.
However, a significant difference is the last $\frac 1\theta$ term for $y\leq 0$, making now \eqref{eq:E0_sticky} discontinuous across $y=0$.
This will ultimately lead to the $\delta_0(\rd y)$ terms in \eqref{eq:trivariate_x} and \eqref{eq:bivariate_reflected_x}, which are precisely the terms missing in \cite{T21}.
\begin{proof}
Changing time variable $t=K_\tau$ and $A_t=(K^{-1})_t=\tau$ with $L_t(S)=L_{A_t}(B)=L_\tau(B)$ as in section~\ref{sec:SBM}, and exploiting \eqref{eq:Lt_Gt}\eqref{eq:Ktau_dKtau}, we have
\begin{multline*}
I
\coloneqq
\EE_0\int_0^\infty  \indic_{[y,\infty)}(S_t)e^{-\lambda t -\beta \G_t(S)-\gamma L_t(S)} \rd t
\\
=\EE_0\int_0^\infty  \indic_{[y,\infty)}(B_{A_t}) e^{-\lambda t -\beta\left(\G_{A_t}(B)+\frac 1\theta L_{A_t}(B)\right)-\gamma L_{A_t}(B)} \rd t
\\
=\EE_0\int_0^\infty  \indic_{[y,\infty)}(B_{\tau}) e^{-\lambda K_\tau -\beta\Gamma_\tau(B)-\left(\gamma+\frac\b\theta\right) L_{\tau}(B)} \rd K_\tau
\\
=\EE_0\int_0^\infty  \indic_{[y,\infty)}(B_{\tau}) e^{-\lambda \tau -\beta\G_\tau(B)-(\gamma+\frac \b\theta +\frac\l\theta) L_{\tau}(B)} \left(\rd \tau +\frac{1}{\theta}\rd L_\tau(B)\right)
\\
=\underbrace{\EE_0\int_0^\infty  \indic_{[y,\infty)}(B_{\tau}) e^{-\lambda \tau -\b\G_\tau(B)-\tilde\g L_{\tau}(B)} \rd \tau}_{\coloneqq I_1}
+
\underbrace{\frac 1\theta\EE_0\int_0^\infty  \indic_{[y,\infty)}(B_{\tau}) e^{-\lambda \tau -\b\G_\tau(B)-\tilde\g L_{\tau}(B)}\rd L_\tau(B)}_{\coloneqq I_2},
\end{multline*}
where we put $\tilde\g=\g+\frac\b\theta+\frac\l\theta$.
Note that the latter expression only involves pure Brownian motion and its local and occupation times $L_\tau(B),\G_\tau(B)$.
The first term is precisely given by Lemma~\ref{lem:integral_dt}, namely
$$
I_1=\frac 2{2\tilde\g+\sqrt{2(\l+\b)}+\sqrt{2\l}}
\begin{cases}
        \frac 1{\sqrt{2(\l+\b)}}e^{-\sqrt{2(\l+\b)}y} & \mbox{if }y>0\\
       \frac 1{\sqrt{2\l}}\left(1-e^{\sqrt{2\l}y}\right)+\frac 1{\sqrt{2(\l+\b)}} & \mbox{if }y\leq 0.
\end{cases}
$$
For $I_2$ we observe that, by classical properties of Brownian motion, the time measure $\rd L_\tau(B)$ only charges $\{\tau:\,B_\tau=0\}$, so we can first remove the indicator as
$$
I_2=
\begin{cases}
0 & \mbox{if }y>0\\
\frac 1\theta\EE_0\int_0^\infty  e^{-\lambda \tau -\b\G_\tau(B)-\tilde\g L_{\tau}(B)}\rd L_\tau(B) & \mbox{if }y\leq 0
\end{cases}
.
$$
In order to compute
$$
J\coloneqq \EE_0\int_0^\infty  e^{-\lambda \tau -\b\G_\tau(B)-\tilde\g L_{\tau}(B)}\rd L_\tau(B)
$$
we take a detour and first determine
\begin{multline*}
 J_1
\coloneqq \EE_0\int_0^\infty  e^{-\lambda \tau -\b\G_\tau(B)-\tilde\g L_{\tau}(B)}[\b\rd \G_\tau(B) + \tilde\g\rd L_\tau(B)]
 \\
 = - \EE_0\int_0^\infty  e^{-\lambda \tau}\rd\left(e^{-\b\G_\tau(B)-\tilde\g L_{\tau}(B)}\right)
 \\
 = - \EE_0\left([0-1] +  \int_0^\infty  e^{-\b\G_\tau(B)-\tilde\g L_{\tau}(B)} \l e^{-\lambda \tau}\rd\tau\right)
 \\
 = 1 -\l \EE_0\int_0^\infty  e^{-\lambda \tau -\b\G_\tau(B)-\tilde\g L_{\tau}(B)}\rd\tau.
\end{multline*}
Here we leveraged $L_0=\Gamma_0=0$ and $L_\infty=\Gamma_\infty=\infty$ almost surely for Brownian motion.
Taking the limit $y\to-\infty$ in \eqref{eq:u0_explicit}, an easy dominated convergence allows to retrieve
\begin{multline*}
J_1=
1- \lambda \lim\limits_{y\to-\infty}\EE_0\int_0^\infty  \indic_{[y,\infty)}(B_\tau)e^{-\lambda \tau -\b\G_\tau(B)-\tilde\g L_{\tau}(B)}\rd\tau
\\
=1 -\l \frac 2{2\tilde \g+\sqrt{2(\l+\b)}+\sqrt{2\l}} \left(\frac 1{\sqrt{2\l}}+\frac 1{\sqrt{2(\l+\b)}}\right).
\end{multline*}
By definition of occupation time we have $\rd \G_\tau(B)=\indic_{[0,\infty)}(B_\tau)\rd \tau$, thus we can also compute explicitly
\begin{multline*}
J_2\coloneqq \EE_0\int_0^\infty  e^{-\lambda \tau -\b\G_\tau(B)-\tilde\g L_{\tau}(B)}\rd \G_\tau(B)
\\
=
\EE_0\int_0^\infty  \indic_{[0,\infty)}(B_\tau) e^{-\lambda \tau -\b\G_\tau(B)-\tilde\g L_{\tau}(B)}\rd\tau
\\
=
\frac 2{2\tilde\g+\sqrt{2(\l+\b)}+\sqrt{2\l}}\frac 1{\sqrt{2(\l+\b)}},
\end{multline*}
again from Lemma~\ref{lem:integral_dt} (with $y=0$).
Straightforward algebra therefore leads to
$$
 J=\frac 1{\tilde\g}\left(J_1-\b J_2\right)
 = \frac 2{2\tilde\g+\sqrt{2(\l+\b)}+\sqrt{2\l}}.
$$
Finally, we get
\begin{multline*}
I= I_1+I_2=I_1+
\begin{cases}
 0 & \mbox{if }y>0\\
 \frac 1\theta J &\mbox{if }y\leq 0
\end{cases}
\\
=
\frac 2{2\tilde\g+\sqrt{2(\l+\b)}+\sqrt{2\l}}
\begin{cases}
        \frac 1{\sqrt{2(\l+\b)}}e^{-\sqrt{2(\l+\b)}y} & \mbox{if }y>0\\
       \frac 1{\sqrt{2\l}}\left(1-e^{\sqrt{2\l}y}\right)+\frac 1{\sqrt{2(\l+\b)}} +\frac 1\theta & \mbox{if }y\leq 0
\end{cases}
\end{multline*}
as claimed and the proof is complete.
\end{proof}
\section{Technical statements and proofs}
Our goal in this section is to prove Theorems \ref{theo:trivariate_x} and \ref{theo:bivariate_reflected_x}.
\begin{prop}
\label{prop:trivariate_0}
The SBM started from $x=0$ has trivariate distribution
\begin{multline}
 \label{eq:trivariate_x=0}
 \PP_0(S_t\in\rd y,\G_t\in\rd\tau,\,L_t\in \rd l)
 =
\frac 1\theta h\left(t-\tau,\frac l2\right) h\left(\tau-\frac l\theta,\frac l2\right)\delta_0(\rd y)\rd\tau\rd l
\\
+ \rd y \rd\tau\rd l
\begin{cases}
h\left(t-\tau,\frac l2\right)h\left(\tau-\frac l\theta,\frac l2+y\right) & \mbox{if }y\geq 0
\\
h\left(\tau-\frac l\theta,\frac l2\right)h\left(t-\tau,\frac l2-y\right) & \mbox{if }y\leq 0
\end{cases}.
\end{multline}
\end{prop}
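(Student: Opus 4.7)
The plan is to treat Proposition~\ref{cor:E0_sticky} as a triple Laplace transform identity in $(t,\tau,l)\leftrightarrow(\l,\b,\g)$ for the tail measure $\PP_0(S_t\geq y,\G_t\in\rd\tau,L_t\in\rd l)\rd t$ (with $y$ a parameter), and simply verify that the candidate trivariate distribution on the right-hand side of \eqref{eq:trivariate_x=0} produces exactly this Laplace transform when integrated over $y'\in[y,\infty)$. The absolutely continuous density in $y\neq 0$ corresponds to $-\partial_y$ of the right-hand side of \eqref{eq:E0_sticky}, while the jump across $y=0$ pins down the coefficient of the $\delta_0(\rd y)$ atom. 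Injectivity of the Laplace transform on measures supported in $\{0\leq \frac l\theta\leq \tau\leq t\}$ then yields the result.

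The only analytical input is the classical identity $\int_0^\infty e^{-\l t}h(t,x)\rd t=e^{-\sqrt{2\l}|x|}$ already exploited in Lemma~\ref{lem:integral_dt}. Setting $a=\sqrt{2\l}$, $b=\sqrt{2(\l+\b)}$ and $c\coloneqq a+b+\tfrac{b^2}{\theta}$, note that $2\tilde\g+a+b=2\g+c$ (with $\tilde\g$ as in Proposition~\ref{cor:E0_sticky}) and that the prefactor $\frac{2}{2\g+c}$ is the Laplace transform in $\g\to l$ of $e^{-cl/2}$. The main computational step is that, for $A,B\geq 0$, the double Laplace transform in $(t,\tau)$ of $h(t-\tau,A)\,h(\tau-\tfrac l\theta,B)$ supported on $\{0\leq \tfrac l\theta\leq\tau\leq t\}$ factorizes, via the change of variables $s=t-\tau$ and $\sigma=\tau-\frac{l}{\theta}$, as
\begin{equation*}
\Bigl(\int_0^\infty e^{-\l s}h(s,A)\,\rd s\Bigr)\,e^{-(\l+\b)l/\theta}\Bigl(\int_0^\infty e^{-(\l+\b)\sigma}h(\sigma,B)\,\rd\sigma\Bigr)
=e^{-aA-bB-b^2 l/(2\theta)}.
\end{equation*}

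Applying this to each of the three pieces of \eqref{eq:trivariate_x=0}, namely $(A,B)=(\tfrac l2,\tfrac l2+y)$ for $y>0$, $(A,B)=(\tfrac l2-y,\tfrac l2)$ for $y<0$, and $(A,B)=(\tfrac l2,\tfrac l2)$ for the Dirac, all three produce $e^{-cl/2}$ times a $y$-dependent tail factor ($e^{-by}$, $e^{ay}$, or $1$ respectively). Integrating the absolutely continuous parts over $y'\in[y,\infty)$ via $\int_y^\infty e^{-by'}\rd y'=\tfrac{e^{-by}}{b}$ and $\int_y^0 e^{ay'}\rd y'=\tfrac{1-e^{ay}}{a}$, adding the contribution $\tfrac 1\theta$ of the Dirac when $y\leq 0$, and finally Laplace-transforming in $l\to\g$ through $\int_0^\infty e^{-\g l}e^{-cl/2}\rd l=\tfrac{2}{2\g+c}$, recovers the right-hand side of \eqref{eq:E0_sticky} in both $y$-regimes. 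The main obstacle is essentially bookkeeping: aligning the support constraints across the three pieces and verifying that the jump of \eqref{eq:E0_sticky} at $y=0$ is precisely $\tfrac{2}{\theta(2\g+c)}$, which fixes the coefficient of the $\delta_0(\rd y)$ atom. A conceptual alternative to this guess-and-verify route would be to invert \eqref{eq:E0_sticky} directly, reading $\tfrac{2}{2\g+c}$ as the $\g$-transform of $e^{-cl/2}$ and then decomposing $e^{-cl/2}=e^{-al/2}\,e^{-bl/2}\,e^{-(\l+\b)l/\theta}$ into the double Laplace transform of shifted $h$-kernels in $(t,\tau)$, which reproduces the three terms of \eqref{eq:trivariate_x=0} without guessing.
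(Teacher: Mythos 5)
Your proposal is correct and is essentially the paper's argument run in the opposite direction: the paper inverts the Laplace transform \eqref{eq:E0_sticky} term by term (recognizing each piece as the transform of a product of shifted $g$ and $h$ kernels, then taking $-\partial_y$ of the tail so that the $\frac1\theta$-jump at $y=0$ produces the $\delta_0(\rd y)$ atom), whereas you forward-transform the candidate density, integrate in $y'$, and invoke injectivity --- the underlying identities ($\L_t[h]=e^{-\sqrt{2s}|x|}$, the factorization over $s=t-\tau$, $\sigma=\tau-\tfrac l\theta$, and the matching of the jump $\tfrac{2}{\theta(2\g+c)}$ to the Dirac coefficient) are the same. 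Your concluding remark about inverting \eqref{eq:E0_sticky} directly describes exactly what the paper does, so there is no substantive difference in method.
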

%
%
%
\begin{proof}
In Proposition~\ref{cor:E0_sticky} we actually computed explicitly the $\L_l\L_\tau\L_t$ Laplace transform of the joint law
$$
\PP_0(S_t\geq y,\,\G_t\in\rd\tau,\,L_t\in \rd l),
$$
with Laplace variables $(\l,\b,\g)$ dual to $(t,\tau,l)$.
The first step is therefore to invert the Laplace transform \eqref{eq:E0_sticky}.
For this we recall that the transforms of \eqref{eq:def_h} are
\begin{align*}
 G(s,x)
 &
 \coloneqq \L_t[g(t,x)](s)=\frac1{\sqrt{2s}}e^{-\sqrt{2s}|x|}
 \\
 H(s,x)
  & \coloneqq \L_t[h(t,x)](s)=e^{-\sqrt{2s}|x|}.
\end{align*}
For $y>0$ we first compute, by usual properties of the Laplace transform,
\begin{multline*}
\L_l\L_\tau\L_t\left[h\left(t-\tau,\frac l2\right)g\left(\tau-\frac l\theta,\frac l2 + y\right)\right](\l,\b,\g)
=
\\
\L_l\L_\tau\left[e^{-\l\tau}H\left(\l,\frac l2\right)g\left(\tau-\frac l\theta,\frac l2 + y\right)\right](\b,\g)
\\
=
\L_l\left[H\left(\l,\frac l2\right)\L_\tau\left[g\left(\bullet-\frac l\theta,\frac l2 + y\right)\right](\b+\l)\right](\g)
\\
=
\L_l\left[H\left(\l,\frac l2\right)e^{-\frac{l}{\theta}(\b+\l)}G\left(\b+\l,\frac l2+y\right)\right](\g)
\\
=
\L_l\left[e^{-\sqrt{2\l}\frac l 2}e^{-\frac{l}{\theta}(\b+\l)}\frac{e^{-\sqrt{2(\b+\l)}\left(\frac l 2 + y\right)}}{\sqrt{2(\b+\l)}}\right](\g)
\\
=\frac{e^{-\sqrt{2(\b+\l)}y}}{\sqrt{2(\b+\l)}}\L_l\left[e^{-l\left(\frac{\sqrt{2\l}}{2} + \frac{\b+\l}{\theta}+\frac{\sqrt{2(\b+\l)}}{2}\right)}\right](\g)
\\
=\frac{e^{-\sqrt{2(\b+\l)}y}}{\sqrt{2(\b+\l)}}
\times \frac{1}{\g + \left(\frac{\sqrt{2\l}}{2} + \frac{\b+\l}{\theta}+\frac{\sqrt{2(\b+\l)}}{2}\right)}.
\end{multline*}
This is exactly the right-hand side of \eqref{eq:E0_sticky} and therefore
\begin{equation}
\label{eq:P0_trivariate_y>0}
y>0:
\qquad
\PP_0(S_t\geq y,\,\G_t\in\rd\tau,\,L_t\in \rd l)=h\left(t-\tau,\frac l2\right) g\left(\tau-\frac l\theta,\frac l2 + y\right)\rd\tau\rd l.
\end{equation}
For $y\leq 0$ we split the right-hand side of \eqref{eq:E0_sticky} as $A\eqqcolon A_1-A_2+A_3+A_4$ and compute separately
\begin{multline*}
\L_l\L_\tau\L_t\left[h\left(\tau-\frac l\theta,\frac l2\right)g\left(t-\tau,\frac l 2\right)\right](\l,\b,\g)
\\
=
\L_l\L_\tau \left[h\left(\tau-\frac l\theta,\frac l2\right)e^{-\l\tau}G\left(\lambda,\frac l2\right)\right](\b,\g)
\\
=
\L_l\left[G\left(\lambda,\frac l2\right)\L_\tau\left[h\left(\bullet-\frac l\theta,\frac l2\right)\right](\l+\beta)\right](\g)
\\
=
\L_l\left[G\left(\lambda,\frac l2\right) e^{-\frac l\theta(\l+\b)} H\left(\l+\b,\frac l2\right)\right](\g)
\\
=\L_l\left[\frac{e^{-\sqrt{2\l}\frac l2}}{\sqrt{2\l}} e^{-\frac l\theta(\l+\b)} e^{-\sqrt{2(\l+\b)}\frac l2}\right](\g)
\\
=
\frac{1}{\sqrt{2\l}}\L_l\left[e^{-l\left(\frac{\sqrt{2\l}}{2}+\frac{\l+\b}{\theta} + \frac{\sqrt{2(\l+\b)}}{2}\right)}\right](\g)
\\
=
\frac{1}{\sqrt{2\l}}\frac 1{\g + \left(\frac{\sqrt{2\l}}{2}+\frac{\l+\b}{\theta} + \frac{\sqrt{2(\l+\b)}}{2}\right)}
=A_1
\end{multline*}
\begin{multline*}
\L_l\L_\tau\L_t\left[h\left(\tau-\frac l\theta,\frac l2\right)g\left(t-\tau,\frac l 2-y\right)\right](\l,\b,\g)
\\
=
\L_l\L_\tau \left[h\left(\tau-\frac l\theta,\frac l2\right)e^{-\l\tau}G\left(\lambda,\frac l2-y\right)\right](\b,\g)
\\
=
\L_l\left[G\left(\lambda,\frac l2-y\right)\L_\tau\left[h\left(\bullet-\frac l\theta,\frac l2\right)\right](\l+\beta)\right](\g)
\\
=
\L_l\left[G\left(\lambda,\frac l2-y\right) e^{-\frac l\theta(\l+\b)} H\left(\l+\b,\frac l2\right)\right](\g)
\\
=\L_l\left[\frac{e^{-\sqrt{2\l}\left(\frac l2-y\right)}}{\sqrt{2\l}} e^{-\frac l\theta(\l+\b)} e^{-\sqrt{2(\l+\b)}\frac l2}\right](\g)
\\
=
\frac{e^{\sqrt{2\l}y}}{\sqrt{2\l}}
\L_l\left[e^{-l\left(\frac{\sqrt{2\l}}{2}+\frac{\l+\b}{\theta} + \frac{\sqrt{2(\l+\b)}}{2}\right)}
\right](\g)
\\
=
\frac{e^{\sqrt{2\l}y}}{\sqrt{2\l}}\frac 1{\g + \left(\frac{\sqrt{2\l}}{2}+\frac{\l+\b}{\theta} + \frac{\sqrt{2(\l+\b)}}{2}\right)}
=A_2
\end{multline*}
\begin{multline*}
 \L_l\L_\tau\L_t \left[ h\left(t-\tau,\frac l2\right) g\left(\tau-\frac l\theta,\frac l2\right)\right](\l,\b,\g)
 \\
 =
 \L_l\L_\tau \left[e^{-\l\tau}H\left(\l,\frac l2\right) g\left(\tau-\frac l\theta,\frac l2\right)\right](\b,\g)
 \\
 =
 \L_l\left[H\left(\l,\frac l2\right) \L_\tau \left[g\left(\bullet-\frac l\theta,\frac l2\right)\right](\l+\b)\right](\g)
 \\
 =
 \L_l\left[H\left(\l,\frac l2\right) e^{-\frac l\theta(\l+\b)}G\left(\l+\b,\frac l2\right)\right](\g)
 \\
 =
 \L_l\left[e^{-\sqrt{2\l}\frac l2} e^{-\frac l\theta(\l+\b)}\frac{e^{-\sqrt{2(\l+\b)}\frac l2}}{\sqrt{2(\l+\b)}}\right](\g)
 \\
 =\frac{1}{\sqrt{2(\l+\b)}}\L_l\left[e^{-l\left(\frac{\sqrt{2\l}}{2}+\frac{\l+\b}{\theta}+\frac{\sqrt{2(\l+\b)}}{2}\right)}\right](\g)
 \\
 \frac{1}{\sqrt{2(\l+\b)}} \frac{1}{\g+\left(\frac{\sqrt{2\l}}{2}+\frac{\l+\b}{\theta}+\frac{\sqrt{2(\l+\b)}}{2}\right)}
 =A_3
\end{multline*}
\begin{multline*}
\L_l\L_\tau\L_t \left[  h\left(t-\tau,\frac l 2\right)h\left(\tau-\frac l\theta,\frac l2\right)\right](\l,\b,\g)
\\
=
\L_l\L_\tau\left[  e^{-\l\tau} H\left(\l,\frac l 2\right)h\left(\tau-\frac l\theta,\frac l2\right)\right](\b,\g)
\\
=
\L_l\left[  H\left(\l,\frac l 2\right)\L_\tau\left[h\left(\bullet-\frac l\theta,\frac l2\right)\right](\l+\b)\right](\g)
\\
=\L_l\left[  H\left(\l,\frac l 2\right)e^{-\frac l\theta(\l+\b)}H\left(\l+\b,\frac l2\right)\right](\g)
\\
=
\L_l\left[  e^{-\sqrt{2\l}\frac l2}e^{-\frac l\theta(\l+\b)}e^{-\sqrt{2(\l+\b)}\frac l2}\right](\g)
\\
=
\L_l\left[  e^{-l\left(\frac{\sqrt{2\l}}{2}+\frac{\l+\b}{\theta}+\frac{\sqrt{2(\l+\b)}}{2}\right)}\right](\g)
\\
=\frac{1}{\g+\left(\frac{\sqrt{2\l}}{2}+\frac{\l+\b}{\theta}+\frac{\sqrt{2(\l+\b)}}{2}\right)}
=\theta A_4.
\end{multline*}
Gathering the inverse Laplace transform of all four terms $A_1-A_2+A_3+A_4$ we deduce
\begin{multline}
 \label{eq:P0_trivariate_y<=0}
 y\leq 0:
\qquad
\PP_0(S_t\geq y,\,\G_t\in\rd\tau,\,L_t\in \rd l)
\\
=
\Bigg[
h\left(\tau-\frac l\theta,\frac l2\right) g\left(t-\tau,\frac l2\right)
- h\left(\tau-\frac l\theta,\frac l2\right) g\left(t-\tau,\frac l2-y\right)
\\
+ h\left(t-\tau,\frac l2\right) g\left(\tau-\frac l\theta,\frac l2\right)
+\frac 1\theta h\left(t-\tau,\frac l2\right) h\left(\tau-\frac l\theta,\frac l2\right)
\Bigg]\rd \tau\rd l.
\end{multline}
The two expression \eqref{eq:P0_trivariate_y>0}\eqref{eq:P0_trivariate_y<=0} fully determine the joint density $\PP_0(S_t\geq y,\,\G_t\in\rd\tau,\,L_t\in \rd l)$ of the pair $(\G_t,L_t)$ on the event $S_t\geq y$, for any $y\in \R$.
In order to retrieve the full trivariate density it only remains to take the derivative with respect to $y$.
Note however that the first two terms $A_1-A_2$ in \eqref{eq:P0_trivariate_y<=0} cancel out by continuity as $y\to 0$, while the third term $A_3$ term matches the limit of \eqref{eq:P0_trivariate_y>0} as $y\to 0$.
As a consequence, and as already anticipated, the last $A_4$ term with weight $\frac 1\theta$ induces a discontinuity across $y=0$.
Taking the distributional derivative
$$
 \PP_0(S_t\in\rd y,\,\G_t\in\rd\tau,\,L_t\in \rd l)
=-\frac{d}{dy}\PP_0(S_t\geq y,\,\G_t\in\rd\tau,\,L_t\in \rd l)
$$
with $\partial_x g(t,x)=-h(t,x)$ gives exactly \eqref{eq:trivariate_x=0} and the proof is complete.
\end{proof}

We are now in position to prove Theorem \ref{theo:trivariate_x}.

\begin{proof}[Proof of Theorem~\ref{theo:trivariate_x}]
Fix $x>0$ (the case $x=0$ is exactly Proposition~\ref{prop:trivariate_0}) and let $T_0=T_0(S)$ be the first hitting time at the origin.
Since SBM shares its excursions with pure BM we have $T_0(S)=T_0(B)$ and we simply write $T_0$.
We distinguish cases $T_0\leq t$ and $T_0>t$.
In this latter case observe that $L_t=0$ and $\Gamma_t=t$, while $S_t=B_t$ for $t<T_0$.
Hence by conditioning and the strong Markov property
 \begin{multline*}
\PP_x(S_t\in\rd y,\G_t\in\rd\tau,L_t\in \rd l)
\\
=
\PP_x(S_t\in\rd y,\G_t\in\rd\tau,L_t\in \rd l,T_0\leq t)
+ \PP_x(S_t\in\rd y,\G_t\in\rd\tau,L_t\in \rd l,T_0 > t)
\\
=\int_0^t \PP_x(S_t\in\rd y,\G_t\in\rd\tau,L_t\in \rd l\,\big \vert\, T_0 =s) \PP_x(T_0\in \rd s)
+\PP_x(B_t\in\rd y,t\in\rd\tau,0\in \rd l,T_0 > t)
\\
=\int_0^t \PP_0(S_{t-s}\in\rd y,\G_{t-s}\in\rd\tau,L_{t-s}\in \rd l) \PP_x(T_0\in \rd s)
+\PP_x(B_t\in\rd y,T_0 > t)\delta_t(\rd\tau)\delta_0(\rd l)
\\
=\underbrace{\int_0^t \PP_0(S_{t-s}\in\rd y,\G_{t-s}\in\rd\tau,L_{t-s}\in \rd l) \PP_x(T_0\in \rd s)}_{\eqqcolon I}
+p^0_t(x,y)\rd y\delta_t(\rd\tau)\delta_0(\rd l).
 \end{multline*}
The $p^0_t(x,y)$ term is exactly the first term in \eqref{eq:trivariate_x}, so we only need to deal with the first integral $I$.
  By definition the hitting time is distributed as $ \PP_x(T_0\in \rd s)=h(s,x)\rd s$, and $\PP_0(S_{t-s}\in\rd y,\Gamma_{t-s}\in\rd\tau,L_{t-s}\in \rd l)$ is given explicitly from Proposition~\ref{prop:trivariate_0} as a product of shifted $h$ functions.
 Note from \eqref{eq:trivariate_x=0} that the $\PP_0$ integrand in $I$ has density with respect to $(\delta_0(\rd y)+\rd y)\rd\tau \rd l$, so clearly $I$ will also be absolutely continuous with respect to that same measure and it suffices to compute its density $\rho_t(y,\tau,l)$.

 Extending $h(s,x)\equiv 0$ for times $s\leq 0$ we can first artificially extend the above time integral to the whole real line $I=\int_0^t(\dots)\rd s=\int_\R(\dots)\rd s$, with as always the implicit notation $0\leq \frac l\theta\leq \tau\leq t$, and then it suffices to distinguish cases depending on $y$.
 We have first
 \begin{multline*}
y=0:\qquad
\rho_t(0,\tau,l)
= \int_0^t\left[\frac 1\theta h\left(t-s-\tau,\frac l2\right) h\left(\tau-\frac l\theta,\frac l2\right)\right] h(s,x)\rd s
\\
=\frac 1\theta h\left(\tau-\frac l\theta,\frac l2\right)\int_\R\left(t-s-\tau,\frac l2\right)  h(s,x)\rd s
\\
=\frac 1\theta h\left(\tau-\frac l\theta,\frac l2\right)
\left[ h\left(\bullet,\frac l2\right)\ast h\left(\bullet,x\right) \right](t-\tau)
\\
=\frac 1\theta h\left(\tau-\frac l\theta,\frac l2\right)h\left(t-\tau,\frac l2+x\right),
 \end{multline*}
 where we leveraged the explicit convolution rule $ h(\bullet,a)\ast h(\bullet,b)=h(\bullet,a+b)$ (Markov property for the first hitting time $T_0$).
 Similarly, there holds
 \begin{multline*}
y>0: \qquad
\rho_t(y,\tau,l) =\int_0^t \left[h\left(t-s-\tau,\frac l2\right) h\left(\tau-\frac l\theta,\frac l2+y\right)\right] h(s,x)\rd s
\\
=h\left(\tau-\frac l\theta,\frac l2+y\right) \int_\R h\left(t-s-\tau,\frac l2\right)  h(s,x)\rd s
\\
=
h\left(\tau-\frac l\theta,\frac l2+y\right)\left[ h\left(\bullet,\frac l2\right)\ast h\left(\bullet,x\right) \right](t-\tau)
\\
=
h\left(\tau-\frac l\theta,\frac l2+y\right)h\left(t-\tau,\frac l2+x\right)
 \end{multline*}
and finally
\begin{multline*}
y<0: \qquad
\rho_t(y,\tau,l) =\int_0^th\left(\tau-\frac l\theta,\frac l2\right)h\left(t-s-\tau,\frac l2-y\right) h(s,x)\rd s
\\
=
h\left(\tau-\frac l\theta,\frac l2\right)\int_\R h\left(t-s-\tau,\frac l2-y\right) h(s,x)\rd s
\\
=
h\left(\tau-\frac l\theta,\frac l2\right)\left[ h\left(\bullet,\frac l2-y\right)\ast h\left(\bullet,x\right) \right](t-\tau)
\\
=
h\left(\tau-\frac l\theta,\frac l2\right)h\left(t-\tau,\frac l2+x-y\right).
 \end{multline*}
\end{proof}
By taking the $(y,l)$ marginal in Theorem \ref{theo:trivariate_x} we obtain the bivariate distribution:
\begin{cor}
For the SBM started from $x\geq 0$ there holds
\begin{multline}
\label{eq:bivariate_x}
\PP_x(S_t\in\rd y,\,L_t\in \rd l)
\\
=
p^0_t(x,y)\rd y\delta_0(\rd l)
+
h\left(t-\frac l\theta, l+x+|y|\right)\,\rd y\rd l
+
\frac 1\theta h\left(t-\frac l\theta,l+x\right)\delta_0(\rd y)\rd l
\end{multline}
\end{cor}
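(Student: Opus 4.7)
The plan is simply to marginalize the trivariate distribution \eqref{eq:trivariate_x} by integrating over $\tau\in[l/\theta,t]$, term by term. The first term $p^0_t(x,y)\rd y\delta_t(\rd\tau)\delta_0(\rd l)$ concentrates on $\tau=t$ and integrates trivially to $p^0_t(x,y)\rd y\delta_0(\rd l)$, accounting for the event $\{T_0>t\}$ in the statement. The remaining two terms all have the structure $h(\tau-\tfrac{l}{\theta},a)\,h(t-\tau,b)\rd\tau$ for suitable parameters $a,b\geq 0$ depending on the sign of $y$, so after the change of variable $s=\tau-l/\theta$ they become classical convolutions evaluated at $t-l/\theta$.

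The key observation — already used in the proof of Theorem~\ref{theo:trivariate_x} — is the convolution identity
\[
\big[h(\bullet,a)\ast h(\bullet,b)\big](r)=h(r,a+b),
\]
which expresses the strong Markov property applied to the first hitting time $T_0$ of standard Brownian motion. Applying this with $(a,b)=(l/2,l/2+x)$ to the $\delta_0(\rd y)$ component yields $\frac{1}{\theta}h(t-l/\theta,l+x)\delta_0(\rd y)\rd l$, which is the third term in \eqref{eq:bivariate_x}. For the absolutely continuous part, the case $y\geq 0$ has $(a,b)=(l/2+y,l/2+x)$ and produces $h(t-l/\theta,l+x+y)\rd y\rd l$, while the case $y\leq 0$ has $(a,b)=(l/2,l/2+x-y)$ and produces $h(t-l/\theta,l+x-y)\rd y\rd l$. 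The two cases combine into the single expression $h(t-l/\theta,l+x+|y|)\rd y\rd l$, which is the middle term in \eqref{eq:bivariate_x}.

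There is essentially no obstacle: once one notes that the trivariate density factorizes in $\tau$ as a product of two shifted $h$-kernels, the marginalization is a one-line convolution. The only minor care needed is to handle the convention $h(s,\cdot)\equiv 0$ for $s\leq 0$ and the implicit constraint $0\leq l/\theta\leq\tau\leq t$, so that the $\tau$-integral can be extended to all of $\R$ before identifying it as a convolution, exactly as in the last step of the proof of Theorem~\ref{theo:trivariate_x}.
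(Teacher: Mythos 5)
Your proposal is correct and matches the paper's proof, which likewise just integrates \eqref{eq:trivariate_x} in $\tau$ using the convolution identity $h(\bullet,a)\ast h(\bullet,b)=h(\bullet,a+b)$; your parameter bookkeeping $(a,b)$ in each of the three cases is exactly right. The paper states this in one line, so your version is simply a more detailed write-up of the same argument.
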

\begin{proof}
Simply integrate \eqref{eq:trivariate_x} in $\tau$, exploiting again the convolution property $h(\bullet,a)\ast h(\bullet,b)=h(\bullet,a+b)$.
\end{proof}
We end with
\begin{proof}[Proof of Theorem~\ref{theo:bivariate_reflected_x}]
For $y>0$ we write $|S_t|\in \rd y\Leftrightarrow S_t\in\left\{\rd y\cup-\rd y\right\}$, and recalling that $p_t(x,y)\equiv 0$ for $y<0$ we have by \eqref{eq:bivariate_x}
\begin{multline*}
y>0:\qquad
\PP_x(|S_t|\in\rd y,\,L_t\in \rd l)
\\
=
\PP_x(S_t\in\rd y,\,L_t\in \rd l) + \PP_x(S_t\in-\rd y,\,L_t\in \rd l)
\\
=\left[p^0_t(x,y)\rd y\delta_0(\rd l) + h\left(t-\frac l\theta, l+x+|y|\right)\,\rd y\rd l\right]
\\
+ h\left(t-\frac l\theta, l+x+|-y|\right)\,\rd y\rd l
\\
=p^0_t(x,y)\rd y\delta_0(\rd l) + 2h\left(t-\frac l\theta, l+x+y\right)\,\rd y\rd l.
\end{multline*}
For $y=0$ only the last $\delta_0(\rd y)$ in \eqref{eq:bivariate_x} contributes, and \eqref{eq:bivariate_reflected_x} follows.
\end{proof}

\section*{Acknowledgments}
J.-B.C. was supported by FCT - Funda\c{c}\~{a}o para a Ci\^encia e a Tecnologia, under the project UIDB/04561/2020
L.M. was funded by FCT - Funda\c{c}\~{a}o para a Ci\^encia e a Tecnologia through a personal grant 2020/00162/CEECIND.
The authors wish to thank A. Anagnostakis and D. Villemonais for fruitful discussions.

\bibliographystyle{plain}
\bibliography{biblio}

\bigskip
\noindent
{\sc
Jean-Baptiste (\href{mailto:jeanbaptiste.casteras@gmail.com}{\tt jeanbaptiste.casteras@gmail.com}).
\\
CMAFcIO, Faculdade de Ci\^encias da Universidade de Lisboa, Edificio C6, Piso 1, Campo Grande 1749-016 Lisboa, Portugal
}
\\

\noindent
{\sc
L\'eonard Monsaingeon (\href{mailto:leonard.monsaingeon@univ-lorraine.fr}{\tt leonard.monsaingeon@univ-lorraine.fr}).
\\
Institut \'Elie Cartan de Lorraine, Universit\'e de Lorraine, Site de Nancy B.P. 70239, F-54506 Vandoeuvre-l\`es-Nancy Cedex, France
\\
Grupo de F\'isica Matem\'atica, GFMUL, Faculdade de Ci\^encias, Universidade de Lisboa, 1749-016 Lisbon, Portugal.
}

\end{document}